\documentclass[12pt]{amsart}

\setlength\topmargin{0cm}
\setlength\textheight{21cm}
\setlength\textwidth{14.5cm}
\setlength\footskip{2cm}

\usepackage{amsmath}
\usepackage{amsthm}
\usepackage{amsfonts}
\usepackage{amssymb}
\usepackage{amscd}

\usepackage[dvips]{graphicx}
\usepackage{psfrag}

\bibliographystyle{plain}

\theoremstyle{plain}

\swapnumbers
\newtheorem{theorem}{Theorem}[section]
\newtheorem{proposition}[theorem]{Proposition}
\newtheorem{corollary}[theorem]{Corollary}
\newtheorem{lemma}[theorem]{Lemma}

\newtheorem{definition}[theorem]{Definition}

\newtheorem{claim}{Claim}

\newcommand{\la} {\lambda}

\newcommand{\R}{\mathbb{R}}

\newcommand{\RR}{{\mathbb R}}


\newcommand{\wt}{\widetilde}

\title[Partially Dominated Splittings]{Partially Dominated Splittings}

\thanks{
  L.S. is partially supported by FAPESB, CNPq and INCTMat-CAPES.
}

\author{Luciana Salgado}
\thanks{2000 AMS Subject
Classification: 37C10 (Primary), 37D30 (Secondary).
{\em Key words and phrases}: Dominated splitting, Partially dominated splitting, Linear Poincar\'e Flow.}

\address{Instituto de Matem\'atica - Universidade Federal da Bahia
\\
Avenida Adhemar de Barros, s/n, Ondina, 40170110, Salvador, Bahia Brazil
}

 \email{lsalgado@ufba.br}

\date{\today}

\begin{document}

\begin{abstract}
 Let $\Lambda$ be a nonsingular compact invariant set for a $C^1$ flow $X_t$,
 defined over a compact riemannian manifold $M$.
  We want to know when the existence of a dominated splitting
  of the tangent bundle $T_{\Lambda}M$ on $\Lambda$ for the associated linear Poincar\'e flow
  is equivalent to the existence of a dominated splitting for the flow. For this,
  we propose a weak form of domination,
  called \emph{partially dominated splitting} and our main result is
  that there is a partially dominated splitting on $\Lambda$ for $X_t$
  if, and only if, the associated linear Poincar\'e flow has a dominated splitting.
\end{abstract}

\maketitle 

\section{Introduction and statement of results}\label{sec:intro}
\hfill

In order to prove the famous stability conjecture of Palis, many researchers proposed a several techniques, 
one of them the so called dominated splittings, as in \cite{Liao1981} and \cite{Ma1988}. 
In this setting, we have a decomposition of the tangent bundle over an invariant compact set for a diffeomorphism or a flow, 
into invariant subbundles which are related by the dynamics from a special form: the angle between the subbundles is far away from zero, 
when we iterate them by derivative of the diffeomorphism or flow (see, for instance, \cite{LiangLiu2008}).

Many works are related to the study of the dominated splittings and its connection with a several numbers of others dynamical phenomena 
such as homoclinic tangencies and robust transitivity, singular, sectional and nonuniform sectional hyperbolicity, among others, 
becoming relevant to study conditions under which a decomposition is dominated. 
see for instance \cite{AraArbSal2011, ArSal2012, ArSal2-2012, ArbSal,Moryiasu1991,Wen2002,NH2004,MPP04,BDV2004,BGV06,
    LiangLiu2008}. In particular, this notion is used for the Linear Poincar\'e Flow, 
    for instance in \cite{GW2006} in the case of absence of singularities and \cite{Doe1987, Liao1980,Man82}, in general.

It is very difficult to obtain domination for the flow from its associated linear Poincar\'e flow, 
even noting that this operator is only defined over regular orbits. In the singular setting, the authors in \cite{GLW2005}, 
have extended this notion and obtained a dominated splitting for the derivative of the flow over a robustly transitive set (allowing singularities) 
from a dominated splitting of the extended linear Poincar\'e flow.

Recall that, for a linear map $L: \R^n \to \R^n$ and a subset $F \subset \R^n \setminus \{0\}$, we have
\begin{equation}\label{eq1}
\Vert L \vert_{F}\Vert =  \sup_{\Vert v \Vert = 1, v \in F} \Vert L v \Vert,
\end{equation}
the norm of $L$ over $F$ and
\begin{equation}\label{eq2}
m(L \vert_{F}) =  \inf_{\Vert v \Vert = 1, v \in F} \Vert L v \Vert,
\end{equation}
the minimal norm of $L$ over $F$.
If $L$ is invertible and $F$ is a subspace of $\R^n$, then equation (\ref{eq2}) is equivalent to
\begin{equation}\label{eq2'}
m(L \vert_{F}) =  \Vert L^{-1} \vert_{L(F)}\Vert^{-1}.
\end{equation}

A compact invariant set $\Lambda$ is said to be hyperbolic for a flow $X_t$ if there exists  a
continuous invariant splitting of the tangent bundle over $\Lambda$, $T_\Lambda M= E^s\oplus E^X \oplus E^u$,
such that $E^s$ is uniformly contracted by $DX_t$ and $E^u$ is
uniformly expanded.

The usual definition of dominated splitting for a flow is established as follow.
\begin{definition}\label{def1}
  A \emph{dominated splitting} over a compact invariant set $\Lambda$ of $X$
  is a continuous $DX_t$-invariant splitting $T_{\Lambda}M =
  E \oplus F$ with $E_x \neq \{0\}$, $F_x \neq \{0\}$ for
  every $x \in \Lambda$ and such that there are positive
  constants $K, \lambda$ satisfying
  \begin{align}\label{eq:def-dom-split}
    \|DX_t|_{E_x}\|\cdot\|DX_{-t}|_{F_{X_t(x)}}\|<Ke^{-\la
      t}, \ \textrm{for all} \ x \in \Lambda, \ \textrm{and
      all} \,\,t> 0.
  \end{align}
\end{definition}

Analogous definition holds for diffeomorphisms.

A compact invariant set $\Lambda$ is said to be
\emph{partially hyperbolic} if it exhibits a dominated
splitting $T_{\Lambda}M = E \oplus F$ such that subbundle
$E$ is uniformly contracted, i.e. there exists $C>0$ and $\lambda>0$ such that $\|DX_t|_{E_x}\|\leq Ce^{-\lambda}$ for $t\geq 0$. 
In this case $F$ is called the \emph{central subbundle} of $\Lambda$.

Similarly, a compact invariant set $\Lambda$ is
\emph{volume hyperbolic} if it has a dominated splitting
$E\oplus F$ such that the volume is
uniformly contracted along $E$ and expanded along $F$ by
the action of the tangent cocyle. If the whole manifold $M$
is a volume-hyperbolic set for a flow $X_t$ (or a diffeomorphism), then we say
that $X_t$ is a volume-hyperbolic flow (diffeomorphism).




In what follows, we give the definition of linear Poincar\'e flow.

Let $\Lambda$ be an invariant nonsingular compact set for a $C^1$ flow $X_t$. 
Consider the quotient space by the flow direction of the tangent bundle on $\Lambda$, $\mathcal{N}^X = T_{\Lambda}M / \langle X \rangle$, 
which we may consider equivalent to the normal bundle to $\langle X \rangle$.

\begin{definition}\label{def-poinc-flow}
The linear Poincar\'e flow on $\Lambda$ associated to the flow $X_t$, $P_t^X : \mathcal{N}^X \to \mathcal{N}^X$ is defined by
\begin{eqnarray}
P_t^X = \Pi \circ DX_t,
\end{eqnarray}
where $\Pi: T_{\Lambda}M \to \mathcal{N}^X$ is the orthogonal projection on the normal bundle.
\end{definition}

A way to see if an invariant compact set $\Lambda$ without singularities has a hyperbolic splitting for a flow $X_t$ is verify if 
there is a hyperbolic splitting on $\Lambda$ for the associated linear Poincar\'e flow, see \cite{Doe1987} and \cite{Wojt2000}.

\begin{theorem}\label{equiv-hyp-split-flow}
     A nonsingular set $\Lambda$ is hyperbolic for a flow $X_t$ if, and only if, there is a hyperbolic splitting 
     on $\Lambda$ for the associated linear Poincar\'e flow.
\end{theorem}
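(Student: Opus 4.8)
The plan is to prove both implications separately, using the relation between $DX_t$ and $P_t^X$ together with compactness of $\Lambda$ and the absence of singularities. First I would fix, for each $x\in\Lambda$, the orthogonal decomposition $T_xM = \langle X(x)\rangle \oplus \mathcal{N}^X_x$, and record the elementary estimates: since $\Lambda$ is compact and nonsingular, $\|X(x)\|$ is bounded above and below by positive constants, and by invariance $DX_t(X(x)) = X(X_t(x))$, so the flow direction is uniformly neither contracted nor expanded. This means that for a vector $v\in T_xM$ written as $v = aX(x) + w$ with $w\in\mathcal{N}^X_x$, the norm of the $\mathcal{N}^X$-component of $DX_t v$ is, up to uniform multiplicative constants, $\|P_t^X w\|$; this comparison is the workhorse of both directions.

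For the forward implication, assume $T_\Lambda M = E^s\oplus E^X\oplus E^u$ is a hyperbolic splitting for $X_t$. I would set $\widetilde E^s_x := \Pi(E^s_x)$ and $\widetilde E^u_x := \Pi(E^u_x)$ inside $\mathcal{N}^X_x$. The first point is that $\Pi$ restricted to $E^s_x$ (resp. $E^u_x$) is an isomorphism onto its image with norm and conorm bounded uniformly in $x$: this is because $E^s, E^u$ are continuous subbundles of the compact set $\Lambda$ that are transverse to $\langle X\rangle$ (they must be, since $X(x)\in E^X_x$ and the splitting is direct), so the angle between $E^s_x\oplus E^u_x$ and $\langle X(x)\rangle$ is bounded away from zero. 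Then $\mathcal{N}^X_x = \widetilde E^s_x\oplus\widetilde E^u_x$, this decomposition is $P_t^X$-invariant by naturality of $\Pi$ with respect to the flow (since $E^X$ is $DX_t$-invariant, $\Pi\circ DX_t = P_t^X\circ\Pi$ holds on $\Lambda$), and the uniform contraction/expansion estimates for $DX_t$ on $E^s, E^u$ transfer to $P_t^X$ on $\widetilde E^s,\widetilde E^u$ up to the uniform constants above, giving a hyperbolic splitting for the linear Poincaré flow.

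For the converse, assume $\mathcal{N}^X = \widetilde E^s\oplus\widetilde E^u$ is hyperbolic for $P_t^X$. The natural candidate is to take $E^X_x := \langle X(x)\rangle$ and to lift $\widetilde E^s_x,\widetilde E^u_x$ back to $T_xM$. Lifting is the delicate step: the naive lift $(\Pi|_{\mathcal{N}^X_x})^{-1}$ is just the identity onto $\mathcal{N}^X_x$, but $\mathcal{N}^X_x$ itself need not be $DX_t$-invariant, so one must correct. The standard device is to define $E^s_x$ as the set of $v = w + a(w)X(x)$, $w\in\widetilde E^s_x$, where the coefficient $a(w)$ is chosen so that the family $\{E^s_x\}$ is $DX_t$-invariant; concretely $E^s$ is obtained as the graph of a linear map $\widetilde E^s\to\langle X\rangle$ solved by a fixed point / summation argument exploiting that the flow is neutral along $\langle X\rangle$ while $\widetilde E^s$ is contracted (so the relevant series converges), and dually for $E^u$ using expansion on $\widetilde E^u$. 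Once $E^s, E^u$ are defined this way they are automatically $DX_t$-invariant, continuous (the fixed point depends continuously on $x$), transverse to $\langle X\rangle$, and the contraction/expansion rates match those of $P_t^X$ up to uniform constants, yielding hyperbolicity of $T_\Lambda M = E^s\oplus E^X\oplus E^u$.

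The main obstacle is exactly this lifting/correction in the converse direction: making precise that the flow-direction components can be chosen to restore $DX_t$-invariance while preserving continuity and the hyperbolic estimates. The key enabling facts are the uniform bounds on $\|X\|$ and on angles coming from compactness and nonsingularity, plus the identity $\Pi\circ DX_t = P_t^X\circ\Pi$; with these the correction terms are controlled by a geometric series and everything goes through. A cleaner alternative I would consider is to bypass explicit lifts by invoking a characterization of hyperbolicity via uniform expansion/contraction of cone fields: pull back the invariant cone fields for $P_t^X$ to cone fields in $\mathcal{N}^X\oplus\langle X\rangle$ transverse to $\langle X\rangle$ and check the cone conditions for $DX_t$ directly, then extract the invariant subbundles from the cone fields in the usual way.
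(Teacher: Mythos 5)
The paper does not actually prove Theorem \ref{equiv-hyp-split-flow}; it quotes the result from \cite{Doe1987} and \cite{Wojt2000}, so there is no internal proof to compare against. Judged on its own, your outline is correct and is the standard argument. The forward direction (project $E^s$ and $E^u$ by $\Pi$, use the identity $\Pi\circ DX_t=P^X_t\circ\Pi$ and the uniform lower bound on the angle between $E^s\oplus E^u$ and $\langle X\rangle$ coming from continuity and compactness) is fine as stated. For the converse you correctly isolate the one genuinely delicate point: $\widetilde E^s_x$ and $\widetilde E^u_x$ must be lifted to $DX_t$-invariant subspaces of $T_xM$ by adding a flow-direction component. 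To make that step complete you should record that the coefficient $a$ solves the cohomological equation $a_{X_t(x)}(P^X_t w)-a_x(w)=c_t(w)$, where $c_t(w)$ is the $\langle X\rangle$-component of $DX_t w$, and that the series defining $a$ converges because $c_t$ is uniformly bounded for $t\in[0,1]$ (compactness and nonsingularity of $\Lambda$) while $\|P^X_t w\|$ decays geometrically on $\widetilde E^s$ (one sums forward) and $\|P^X_{-t}w\|$ decays geometrically on $\widetilde E^u$ (one sums backward); uniform convergence then gives continuity and the transfer of the rates. It is worth noting that your ``cleaner alternative'' via cone fields is essentially the route the paper itself takes for the analogous equivalences in Theorem \ref{th:poinc-flow-equivalence} and Corollary \ref{cor:ph-DD}: there the easy direction is handled by an adapted metric making the bundles orthogonal, and the converse by invariant cone fields around $N^{\pm}$ together with Newhouse's criterion (Proposition \ref{prop:NH}). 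Your graph/series construction is more self-contained and exhibits the invariant bundles explicitly, at the cost of solving the correction equation; the cone-field route avoids that computation but delegates the extraction of invariant subbundles to a quoted criterion.
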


It is proved below a similar statement for partial hyperbolicity over non-singular invariant sets for flows, see Corollary \ref{cor:ph-DD}.

It seems a natural question to ask if similar equivalence holds to dominated splittings. But, it is not true in general, 
as shows a counter example suggested by Pujals in \cite{BDV2004}. 
In this example, is obtained a robustly transitive suspended flow that has no dominated splitting.  
It is also known that every $C^1$-robustly transitive set for a diffeomorphism has a dominated splitting, see \cite{BDP2003}. 
On the other hand, Vivier in \cite{Viv2003}, proved that robustly transitive vector fields on a closed manifold of any dimension, 
always have a dominated splitting for the linear Poincar\'e flow. In the case of manifold with boundary, 
Gan, Li and Wen in \cite{GLW2005}, proved a singular version of Vivier's result by generalizing the notion of linear Poincar\'e flow 
and dominated splitting for singularities.

In fact, in \cite[Appendix B]{BDV2004} the authors exhibit diffeomorphisms on $\mathbb{T}^4$, robustly transitive, 
for which the finest dominated splitting $E \oplus F$ has neither contracting nor expanding subbundles 
(see figure \ref{fig:nonhypcomplex}): there are periodic orbits with contracting eigendirections contained in $F$ 
and other periodic orbits with expanding eigendirections contained in $E$. 
Thus, the suspension flow is a robustly transitive flow which has no dominated splitting. 
This is because the flow direction is dominated neither by $E$ nor $F$. 
Also see \cite{AraArbSal2011} for more studies about conditions for an invariant splitting for a flow to be dominated.

\begin{figure}[htpb]
    \includegraphics[height=2cm]{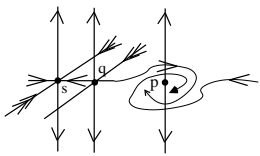}
    \caption{Saddles with real and complex eigenvalues.}
    \label{fig:nonhypcomplex}
\end{figure}


So, in order to study the domination properties and
the conditions for a splitting to be dominated for a $C^1$ flow, what sometimes
requires to know if some subbundle of the decomposition contains the field direction,
we propose here the following definition.

\begin{definition}\label{def-part-dom-flow}
  Let $\Lambda$ be a compact invariant set
  of a $C^1$ flow $X_t$.
  A \emph{partially dominated splitting} over $\Lambda$
  is a continuous $DX_t$-invariant splitting
  $T_{\Lambda}M = \wt{E} \oplus \langle X \rangle \oplus \wt{F}$
  where $\wt{E}_x \neq \{0\}$, $\wt{F}_x \neq \{0\}$ and
  $\langle X \rangle$ is the flow direction, for
  every $x \in \Lambda$ and such that there are positive
  constants $K, \lambda$ satisfying
  \begin{align}\label{eq:def-part-dom-split}
    \|DX_t|_{\wt{E}_x}\|\cdot\|DX_{-t}|_{\wt{F}_{X_t(x)}}\|<Ke^{-\la
      t}, \ \textrm{for all} \ x \in \Lambda, \ \textrm{and
      all} \,\,t> 0.
  \end{align}
\end{definition}

We are interested to study what are the conditions to a partial dominated splitting to induce a dominated splitting.

It is known that, in a hyperbolic decomposition $E^s_x \oplus \langle X \rangle \oplus E^u_x$ for a flow without singularities, 
both splittings $(E^s_x \oplus \langle X \rangle) \oplus E^u_x$ and $E^s_x \oplus (\langle X \rangle \oplus E^u_x)$ are dominated. 
This fact is a consequence of the uniforms expansion and contraction properties of $E^u_x, E^s_x$, respectively. 
On the other hand, if we have no uniform hyperbolicity or in the case of existence of hyperbolic singularities accumulated by regular orbits, 
to ensure that an invariant splitting is dominated we must know, among other things, in which subbundle is located the flow direction of the regular orbits.

Our main result is the following.

\begin{theorem}\label{th:poinc-flow-equivalence}
Let $X$ be a $C^1$-vector field on $M$ and $X_t$ its flow.
Let $\Lambda \subset M \setminus Sing(X)$ be a compact invariant set for $X_t$. Then,
there is a partially dominated splitting on $\Lambda$ for the flow if, and only if, the associated
Linear Poincar\'e Flow has a dominated splitting on $\Lambda$ .
\end{theorem}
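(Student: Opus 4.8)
The plan is to transport the splitting across the orthogonal projection $\Pi\colon T_\Lambda M\to\mathcal N^X$ and, in the other direction, across the pull-back of subbundles. Throughout I use the standing facts that, since $\Lambda$ is compact with $\Lambda\cap Sing(X)=\emptyset$, we have $0<\inf_\Lambda\|X\|\le\sup_\Lambda\|X\|<\infty$, so the cocycle along the flow direction is uniformly bounded, $\|DX_t|_{\langle X_x\rangle}\|=\|X_{X_t(x)}\|/\|X_x\|\in[c^{-1},c]$ for a fixed $c\ge1$ and all $x\in\Lambda$, $t\in\R$ (in Lyapunov language, $\langle X\rangle$ carries only the zero exponent); that $\Pi\circ DX_t=P^X_t\circ\Pi$ because $\Pi$ annihilates $\langle X\rangle$ and $DX_t$ preserves it; that $\Pi$ is $1$-Lipschitz; and that transversality of continuous subbundles over the compact set $\Lambda$ is uniform (angles bounded below).

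\emph{Necessity.} Suppose $T_\Lambda M=\wt E\oplus\langle X\rangle\oplus\wt F$ is a partially dominated splitting. Put $E^N:=\Pi(\wt E)$, $F^N:=\Pi(\wt F)$. Since $\wt E_x\cap\langle X_x\rangle=\{0\}$ (and similarly for $\wt F$), $\Pi$ is fibrewise injective on each, so $E^N,F^N$ are continuous subbundles of $\mathcal N^X$; $\Pi\circ DX_t=P^X_t\circ\Pi$ together with invariance of $\wt E\oplus\langle X\rangle$ and $\langle X\rangle\oplus\wt F$ gives $P^X_t$-invariance; and since $\Pi$ maps the complement $\wt E\oplus\wt F$ of $\langle X\rangle$ isomorphically onto $\mathcal N^X$, a dimension count yields $\mathcal N^X=E^N\oplus F^N$. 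For the domination, let $\theta>0$ bound from below the angles $\ang(\wt E_x,\langle X_x\rangle)$ and $\ang(\wt F_x,\langle X_x\rangle)$; then for $e\in\wt E_x$ with $\|\Pi e\|=1$ one has $\|e\|\le(\sin\theta)^{-1}$, hence $\|P^X_t|_{E^N_x}\|\le(\sin\theta)^{-1}\|DX_t|_{\wt E_x}\|$, and symmetrically $\|P^X_{-t}|_{F^N_{X_t(x)}}\|\le(\sin\theta)^{-1}\|DX_{-t}|_{\wt F_{X_t(x)}}\|$; multiplying and using \reff{eq:def-part-dom-split} gives $\|P^X_t|_{E^N_x}\|\,\|P^X_{-t}|_{F^N_{X_t(x)}}\|<K(\sin\theta)^{-2}e^{-\lambda t}$.

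\emph{Sufficiency.} Suppose $\mathcal N^X=E^N\oplus F^N$ is dominated for $P^X_t$ with constants $K,\lambda$. Pull back: $\wt E^+:=\Pi^{-1}(E^N)$, $\wt F^+:=\Pi^{-1}(F^N)$. From $\Pi\circ DX_t=P^X_t\circ\Pi$ these are continuous, $DX_t$-invariant, each contains $\langle X\rangle$, with $\wt E^+\cap\wt F^+=\langle X\rangle$ and $\wt E^++\wt F^+=T_\Lambda M$. The heart of the argument is to split $\langle X\rangle$ off each of them, producing $\wt E\subset\wt E^+$, $\wt F\subset\wt F^+$ complementary to $\langle X\rangle$ (so $T_\Lambda M=\wt E\oplus\langle X\rangle\oplus\wt F$, with $\wt E^+=\wt E\oplus\langle X\rangle$ and $\wt F^+=\langle X\rangle\oplus\wt F$ the invariant pieces). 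Here one argues according to the Lyapunov spectrum of $E^N$ (and of $F^N$): if every invariant measure has only negative (resp. only positive) exponents along $E^N$ — equivalently $E^N$ is uniformly contracted (resp. expanded) — then $\langle X\rangle$ is dominated by (resp. dominates) $E^N$ inside $\wt E^+$, so the bundle of complements of $\langle X\rangle$ in $\wt E^+$ carries a $DX_t$-contracted (resp. $DX_{-t}$-contracted) cone field, and the graph transform yields a unique $DX_t$-invariant complement $\wt E$ of uniformly bounded slope; if instead the $E^N$-exponents straddle $0$, one takes the bounded-slope complement $\wt E:=\wt E^+\cap\langle X\rangle^\perp$. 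Proceed symmetrically for $\wt F\subset\wt F^+$.

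\emph{Closing the estimate and the main obstacle.} When $\wt E$ (resp. $\wt F$) is the bounded-slope invariant complement one gets $\|DX_t|_{\wt E_x}\|\le C\|P^X_t|_{E^N_x}\|$ (resp. $\|DX_{-t}|_{\wt F_{X_t(x)}}\|\le C\|P^X_{-t}|_{F^N_{X_t(x)}}\|$), so \reff{eq:def-part-dom-split} is immediate from the domination of $P^X_t$; this covers the case where neither bundle straddles $0$. Suppose $E^N$ straddles $0$, so $\wt E=\wt E^+\cap\langle X\rangle^\perp$ is not dynamical. The cocycle recursion for the shear of $\wt E$ into $\langle X\rangle$ (controlled by the bound $[c^{-1},c]$ on the flow cocycle) only gives $\|DX_t|_{\wt E_x}\|\le C\sum_{0\le k\le t}\|P^X_k|_{E^N_x}\|$, which by the uniform subadditive ergodic theorem is $\le C_\varepsilon e^{(\mu_E+\varepsilon)t}$ with $\mu_E:=\sup_\mu\lambda^+(E^N,\mu)\ge0$; but then the domination gap forces $F^N$ to be uniformly expanded with rate $>\mu_E+\lambda$, so (as $E^N$ straddles, $F^N$ cannot) the invariant $\wt F$ satisfies $\|DX_{-t}|_{\wt F_{X_t(x)}}\|\le C_\varepsilon e^{-(\mu_E+\lambda-\varepsilon)t}$, and the product decays like $e^{-(\lambda-2\varepsilon)t}$; the case where $F^N$ straddles (then $E^N$ is uniformly contracted) is symmetric. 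The main obstacle is precisely this ``straddling'' situation: there $\langle X\rangle$ admits no spectral gap inside $\wt E^+$ (or $\wt F^+$) and hence no evident invariant complement, so one must work with a non-dynamical complement, keep track of the polynomially growing shear terms, and verify — using only the domination gap $\lambda$ of $P^X_t$ — that the product of the two operator norms still decays exponentially. The necessity direction and the hyperbolic cases of sufficiency are routine verifications.
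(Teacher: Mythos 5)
Your necessity direction is correct and is essentially the paper's argument in different clothes: the paper replaces the Riemannian metric by an equivalent one making $\wt E$, $\langle X\rangle$, $\wt F$ mutually orthogonal, which plays exactly the role of your uniform lower bound $\theta$ on the angles; either way the projected bundles inherit the domination up to a bounded multiplicative constant. The genuine gap is in the sufficiency direction, in the case you call ``straddling''. Definition \ref{def-part-dom-flow} requires the splitting $\wt E\oplus\langle X\rangle\oplus\wt F$ to be a continuous $DX_t$-\emph{invariant} splitting. In the straddling case you set $\wt E:=\wt E^+\cap\langle X\rangle^\perp$ and concede that this complement ``is not dynamical''; verifying the norm inequality \reff{eq:def-part-dom-split} for a non-invariant family of subspaces does not produce a partially dominated splitting, so in that case the theorem is not proved. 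This is not a removable technicality: the entire content of the sufficiency direction is the construction of \emph{invariant} complements of $\langle X\rangle$ inside $\Pi^{-1}(N^-_{\Lambda})$ and $\Pi^{-1}(N^+_{\Lambda})$, and your graph-transform argument delivers them only when there is a spectral gap against the neutral exponent carried by $\langle X\rangle$.

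The quantitative estimate you offer in the straddling case is also unsound. The quantity $\mu_E$ is a supremum over invariant measures, so the uniform bound $\|P^X_{-t}|_{F^N_{X_t(x)}}\|\le C_\varepsilon e^{-(\mu_E+\lambda-\varepsilon)t}$ for \emph{all} $x$ does not follow from domination: domination controls the product of the two norms at each fixed $x$, while $\|P^X_t|_{E^N_x}\|$ need not grow at rate $\mu_E$ along every orbit, only along generic orbits of a maximizing measure. Bounding the two factors separately by orbit-independent rates decouples them and discards precisely the pointwise correlation that the domination hypothesis provides. The paper's route avoids both problems: from the dominated splitting $N^-_{\Lambda}\oplus N^+_{\Lambda}$ for $P^X_t$ it builds a cone field around $N^+_{\Lambda}$ transverse to $\langle X\rangle$, checks that some $DX_{T_0}$ is strongly dominating on it (using $\Vert\Pi\Vert=1$, so the expansion and co-expansion coefficients of $DX_{T_0}$ dominate those of $P^X_{T_0}$), and invokes the Newhouse-type criterion of Proposition \ref{prop:NH} to obtain genuinely $DX_t$-invariant bundles $\widetilde E,\widetilde F$; the one delicate point --- that the limiting invariant bundles produced by the cone iteration cannot contain the flow direction --- is settled using the nonsingularity and compactness of $\Lambda$. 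To salvage your approach you would have to prove that an invariant complement of $\langle X\rangle$ in $\Pi^{-1}(N^{\pm}_{\Lambda})$ exists \emph{without} any spectral gap hypothesis, which is exactly the job the cone-field argument is doing.
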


This paper is organized as follows: in Section \ref{sec:intro}, we present main definitions, main result and an example as a motivation; 
in Section \ref{sec:applic}, we give some applications of the notion of partially dominated splitting and Theorem \ref{th:poinc-flow-equivalence}; 
in section \ref{sec:main-proof}, we have some complementary definitions and it is given the demonstration of the main theorem.

\section*{Acknowledgements}
The author would like to thank to V. Araujo, A. Arbieto, B. Santiago and C. Morales for very enlightening conversations. 
She also thanks the financial support of FAPESB-Jovem Cientista, CNPq and INCTMat-CAPES postdoctoral scholarship at IMPA-Brazil, 
where this work began to be developed.

\section{Applications}\label{sec:applic}

In the sequence, we present some applications.

First, we recall the following result of \cite{AraArbSal2011}.

Let $\Lambda$ be a non-singular compact invariant set for the flow
$X_t$ of a $C^1$ vector field $X$ on $M$.
\begin{lemma}
  \label{le:flow-center}
  Given a continuous invariant splitting
  $E\oplus F$ of $T_\Lambda M$ over
  $\Lambda$, such that $E$ is uniformly
  contracted, then the flow direction is contained in the
  $F$ subbundle, for all $x\in \Lambda$.
\end{lemma}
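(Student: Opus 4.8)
My plan is to argue by contradiction, using only two soft compactness facts together with the definition of uniform contraction. First I would record that, since $\Lambda$ is compact and $X$ has no zeros on $\Lambda$, the norm $\|X(\cdot)\|$ is bounded above and below on $\Lambda$; write $C_1:=\sup_{y\in\Lambda}\|X(y)\|<\infty$. Second, because $E\oplus F$ is a \emph{continuous} splitting over the compact set $\Lambda$, the (minimal) angle between $E_y$ and $F_y$ admits a uniform lower bound $\alpha_0>0$; equivalently, for $a\in E_y$ and $b\in F_y$ one has $\|a+b\|\ge(\sin\alpha_0)\|a\|$, as one sees by projecting $a+b$ orthogonally onto $F_y^{\perp}$. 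The hypothesis gives constants $C,\lambda>0$ with $\|DX_t|_{E_y}\|\le Ce^{-\lambda t}$ for every $y\in\Lambda$ and every $t\ge0$.

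Now suppose the conclusion fails, say $X(x_0)\notin F_{x_0}$ for some $x_0\in\Lambda$, and decompose $X(x_0)=v_E+v_F$ along $E_{x_0}\oplus F_{x_0}$, so that $v_E\ne0$. The key step is to iterate in \emph{negative} time: for $t>0$ we have $v_E=DX_t(DX_{-t}v_E)$ with $DX_{-t}v_E\in E_{X_{-t}(x_0)}$, so applying the contraction estimate at the basepoint $X_{-t}(x_0)$ gives $\|v_E\|\le\|DX_t|_{E_{X_{-t}(x_0)}}\|\cdot\|DX_{-t}v_E\|\le Ce^{-\lambda t}\|DX_{-t}v_E\|$, whence $\|DX_{-t}v_E\|\ge C^{-1}e^{\lambda t}\|v_E\|\to\infty$. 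By invariance of the splitting, $X(X_{-t}(x_0))=DX_{-t}X(x_0)=DX_{-t}v_E+DX_{-t}v_F$ is the decomposition of $X(X_{-t}(x_0))$ along $E_{X_{-t}(x_0)}\oplus F_{X_{-t}(x_0)}$, so the angle bound forces $\|X(X_{-t}(x_0))\|\ge(\sin\alpha_0)\|DX_{-t}v_E\|\to\infty$, contradicting $\|X(\cdot)\|\le C_1$. Hence $v_E=0$, i.e. $X(x_0)\in F_{x_0}$; as $x_0$ was arbitrary, the flow direction $\langle X\rangle$ lies in $F$ over all of $\Lambda$.

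I do not expect a serious obstacle here: the proof is short and the two compactness facts are routine. The crux, modest as it is, is the direction of iteration — in forward time the $E$-component of $X(x_0)$ merely shrinks and nothing constrains $v_E$, whereas in backward time it is forced to grow exponentially; and the uniform lower bound on the angle between $E$ and $F$ is precisely what prevents the two growing vectors $DX_{-t}v_E$ and $DX_{-t}v_F$ from cancelling inside the (bounded) vector $X(X_{-t}(x_0))$. One should also be careful to apply the contraction hypothesis at the moving point $X_{-t}(x_0)$, which is legitimate since it holds uniformly over $\Lambda$.
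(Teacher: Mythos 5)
Your proof is correct. The paper does not actually prove this lemma --- it only recalls it from \cite{AraArbSal2011} --- but your argument is essentially the standard one given there: decompose $X(x_0)=v_E+v_F$, iterate \emph{backwards} so that uniform contraction of $E$ forces $\|DX_{-t}v_E\|\ge C^{-1}e^{\lambda t}\|v_E\|$ to blow up, and use the uniform lower bound on the angle between $E$ and $F$ (which, as you note and as the paper's follow-up remark also points out, follows from continuity of the splitting plus compactness of $\Lambda$) to contradict the boundedness of $\|X\|$ on the nonsingular compact set $\Lambda$.
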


Note that the hypothesis about the angle might be changed by continuity of the splitting.

One of the main features of a dominated splitting $E \oplus F$ is that $E$ may not be contracting and $F$ may not be expanding, 
but $E$ must be contracting if $F$ is not expanding and $F$ must be expanding if $E$ is not contracting.

Based on this fact, we have the following.

\begin{proposition}\label{cor:suf-cond-DD}
A sufficient condition for a partially dominated splitting $E\oplus \langle X \rangle \oplus F$ over a compact invariant nonsingular set for a flow 
to induce a dominated one is that the subbundle $E$ (or $F$) is either uniformly contracting or uniformly expanding.
\end{proposition}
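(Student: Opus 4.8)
The plan is to reduce the statement to the known equivalence between partial and ordinary domination (Theorem~\ref{th:poinc-flow-equivalence}) together with the standard trichotomy for dominated splittings and Lemma~\ref{le:flow-center}. Assume first that the subbundle $E$ in the partially dominated splitting $\wt{E}\oplus\langle X\rangle\oplus\wt{F}$ is uniformly contracting. Then I would consider the candidate splitting $\wt{E}\oplus(\langle X\rangle\oplus\wt{F})$ of $T_{\Lambda}M$ into two invariant subbundles. The domination inequality for this grouping follows because the original partially dominated estimate $\|DX_t|_{\wt{E}_x}\|\cdot\|DX_{-t}|_{\wt{F}_{X_t(x)}}\|<Ke^{-\lambda t}$ controls the $\wt{F}$-direction, while the flow direction $\langle X\rangle$ is controlled directly: on a compact nonsingular set $\|DX_{\pm t}|_{\langle X\rangle}\|$ grows at most subexponentially (indeed it is pinched between $\inf\|X\|>0$ and $\sup\|X\|<\infty$ up to a factor coming from the time-$t$ map, which is polynomially bounded), and combined with the exponential smallness of $\|DX_t|_{\wt{E}_x}\|$ coming from uniform contraction of $\wt{E}$, the product over the two-dimensional-or-more subbundle $\langle X\rangle\oplus\wt{F}$ still decays exponentially after passing to a slightly smaller exponent $\lambda'<\lambda$ and a larger constant $K'$. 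Concretely: for a direction $v\in\langle X\rangle\oplus\wt{F}$, decompose $v=v_0+v_F$ and bound $\|DX_{-t}v\|$ below using that $DX_{-t}$ preserves the splitting and that $\wt F$ is dominated by $\langle X\rangle$ on the relevant side; since $E$ is uniformly contracted, $\|DX_t|_{E_x}\|\le Ce^{-\lambda t}$, so multiplying by the at-most-polynomial lower bound $m(DX_{-t}|_{\langle X\rangle\oplus\wt F})^{-1}$ yields the required exponential estimate. Thus $\wt E\oplus(\langle X\rangle\oplus\wt F)$ is a genuine dominated splitting and we are done.

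If instead $\wt{F}$ is uniformly expanding, the symmetric argument applies: the grouping $(\wt{E}\oplus\langle X\rangle)\oplus\wt{F}$ is dominated, because uniform expansion of $\wt{F}$ means $\|DX_{-t}|_{\wt F}\|$ decays exponentially, which dominates the subexponential growth of $DX_{\pm t}$ on $\langle X\rangle$ and the $\wt E$-side estimate. It remains to treat the two hypotheses ``$E$ uniformly expanding'' and ``$F$ uniformly contracting''. If $\wt E$ is uniformly expanding, then since the partially dominated inequality forces the $\wt E$-direction to be, roughly speaking, slower than $\wt F$, the subbundle $\wt F$ must be uniformly expanding as well (this is exactly the trichotomy: in a dominated pair the weaker bundle expanding implies the stronger one expands), so we fall back into the previous case. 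Symmetrically, if $\wt F$ is uniformly contracting then $\wt E$ is uniformly contracted, again reducing to a handled case. (Alternatively, and more cleanly, one invokes Theorem~\ref{th:poinc-flow-equivalence}: a partially dominated splitting gives a dominated splitting $\wt{E}^X\oplus\wt{F}^X$ for the linear Poincar\'e flow $P^X_t$; uniform contraction or expansion of $E$, respectively $F$, along $DX_t$ transfers to $P^X_t$ since $\Pi$ is an isometry on the subbundles transverse to $X$ after modding out the flow line, and one then lifts the Poincar\'e-flow domination back through the theorem to the desired two-block dominated splitting for $DX_t$ with the flow line absorbed into the appropriate block.)

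The main obstacle I expect is the bookkeeping needed to absorb the flow direction $\langle X\rangle$ into one of the two blocks while keeping the domination inequality honest: $\langle X\rangle$ is invariant but not dominated in general, so one genuinely needs either the exponential contraction of $\wt E$ (to beat the polynomial growth of $DX_t$ restricted to $\langle X\rangle\oplus\wt F$) or the exponential expansion of $\wt F$ (to beat it on $\wt E\oplus\langle X\rangle$), and one must verify that no uniform constants blow up when passing from the norm over a single sub-line to the norm over a direct sum of two subbundles — this uses continuity of the splitting and compactness of $\Lambda$ to make the angle between $\langle X\rangle$ and $\wt F$ (resp.\ $\wt E$) bounded away from zero, so that norms and minimal norms over the sum are comparable, up to a fixed constant, to the corresponding quantities over the summands. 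Once that uniformity is secured, combining it with Definition~\ref{def1} and the partially dominated estimate \eqref{eq:def-part-dom-split} is routine.
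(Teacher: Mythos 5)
Your main argument is correct and is essentially the paper's own proof: when $E$ is uniformly contracted you absorb $\langle X\rangle$ into the $F$-block, bound $\|DX_{-t}\|$ on $\langle X\rangle\oplus F$ by the triangle inequality using that $\|X\|$ is pinched between positive constants on a compact nonsingular set, and let the exponential decay of $\|DX_t|_{E}\|$ together with the partially dominated estimate \eqref{eq:def-part-dom-split} absorb the rest, with the expanding-$F$ case obtained by time reversal exactly as in the paper. Your extra reduction of the cases ``$E$ expanding'' and ``$F$ contracting'' to these two via the domination trichotomy, and your explicit observation that bounded angles (from continuity of the splitting and compactness) are needed to compare norms on the sum with norms on the summands, are refinements of points the paper leaves implicit rather than a different route.
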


\begin{proof}
First, suppose that $E$ is an uniformly contracting subbundle, i.e., there is $K>0$ such that
\begin{align*}
\Vert DX_t\vert_{E_x}\Vert \leq K e^{-\lambda t}, \ \forall t > 0, \ \forall x \in \Lambda.
\end{align*}

Then, taking $G:= \langle X \rangle \oplus F$, we must have given $u \in E_x$ and $v = \alpha X(x) + v^F \in G_x, v^F \in F, \alpha \in \mathbb{R}$,
\begin{align*}
\Vert DX_t(x)u \Vert  \cdot \Vert DX_{-t}(x) v \Vert = \\
\Vert DX_t(x)u \Vert  \cdot \Vert DX_{-t}(x) \alpha X(x) + DX_{-t}(x) v^F \Vert \\
\leq \Vert DX_t(x)u \Vert  \cdot \big[\Vert  \alpha X(X_{-t}(x)) \Vert +\Vert DX_{-t}(x) v^F \Vert \big]  \\
\leq K e^{-\lambda t} \Vert  \alpha X(X_{-t}(x)) \Vert + \Vert DX_t(x)u \Vert  \cdot \Vert DX_{-t}(x) v^F \Vert\\
\leq  C e^{-\lambda t}, \ \forall t > 0, \ \forall x \in \Lambda.
\end{align*}
The last inequality is true, because there exist constants $A, B > 0$ such that $A \leq \Vert X(X_{-t}(x))\Vert \leq B$ 
and we have the domination condition between $E$ and $F$.

On the other hand, look that, if $F$ is uniformly expanding, by the same argument applied to the reverse flow, we obtain a similar result. 
This complete the proof.
\end{proof}

\begin{corollary}\label{cor:ph-DD}
A compact invariant nonsingular set $\Lambda$ is partially hyperbolic for a flow $X_t$ if, and only if, 
the associated Linear Poincar\'e Flow is partially hyperbolic on $\Lambda$.
\end{corollary}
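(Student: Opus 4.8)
The plan is to prove both implications by transporting splittings along the orthogonal projection $\Pi\colon T_\Lambda M\to\mathcal{N}^X$, combining Lemma~\ref{le:flow-center}, Proposition~\ref{cor:suf-cond-DD} and Theorem~\ref{th:poinc-flow-equivalence}. Two elementary facts will be used repeatedly: since $\Lambda$ is compact and nonsingular there are constants $0<A\le\|X(x)\|\le B$ for all $x\in\Lambda$; and any continuous splitting of a bundle over the compact set $\Lambda$ has all pairwise angles between its subbundles uniformly bounded away from $0$, so that for a vector $v$ in one of the subbundles $\|v\|$ is comparable to the norm of its projection onto the sum of the others.

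For the direct implication I would start from a dominated splitting $T_\Lambda M=E\oplus F$ with $E$ uniformly contracted. By Lemma~\ref{le:flow-center} the flow direction lies in $F$, so $E\cap\langle X\rangle=\{0\}$ and $\langle X\rangle\subset F$; hence $\Pi|_E$ is injective and $\ker(\Pi|_F)=\langle X\rangle$, so that $\mathcal{N}^s:=\Pi(E)$ and $\mathcal{N}^c:=\Pi(F)$ are continuous $P_t^X$-invariant subbundles with $\mathcal{N}^X=\mathcal{N}^s\oplus\mathcal{N}^c$. I would then represent a class in $\mathcal{N}^s_x$ by its (unique) lift in $E_x$, and a class in $\mathcal{N}^c_{X_t(x)}$ by the unique lift in $F_{X_t(x)}$ orthogonal to $X(X_t(x))$, whose norm equals that of the class. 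Since $\|P_t^X\bar u\|=\|\Pi(DX_t u)\|\le\|DX_t u\|$ for any lift $u$, the angle bounds give $\|P_t^X|_{\mathcal{N}^s_x}\|\le C\,\|DX_t|_{E_x}\|$ and $\|P_{-t}^X|_{\mathcal{N}^c_{X_t(x)}}\|\le\|DX_{-t}|_{F_{X_t(x)}}\|$. The first estimate together with the uniform contraction of $E$ shows $\mathcal{N}^s$ is uniformly contracted by $P_t^X$; multiplying the two estimates and using the domination of $E\oplus F$ shows $\mathcal{N}^s\oplus\mathcal{N}^c$ is dominated. Hence the linear Poincar\'e flow is partially hyperbolic on $\Lambda$.

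For the converse I would start from a dominated splitting $\mathcal{N}^X=\mathcal{N}^s\oplus\mathcal{N}^c$ of $P_t^X$ with $\mathcal{N}^s$ uniformly contracted. Applying Theorem~\ref{th:poinc-flow-equivalence} to this splitting yields a partially dominated splitting $T_\Lambda M=\wt E\oplus\langle X\rangle\oplus\wt F$ of the flow which, by the construction in the proof of that theorem (or by uniqueness of the dominated splitting of $P_t^X$ with prescribed fibre dimension), can be taken with $\Pi(\wt E)=\mathcal{N}^s$. The key point is then that $\wt E$ is uniformly contracted by $DX_t$: for $v\in\wt E_x$ one has $DX_t v\in\wt E_{X_t(x)}$ and $\Pi(DX_t v)=P_t^X(\Pi v)$, so the angle bound between $\wt E$ and $\langle X\rangle$ over the compact set $\Lambda$ gives $\|DX_t v\|\le C_0\|\Pi(DX_t v)\|=C_0\|P_t^X(\Pi v)\|\le C_0 C e^{-\lambda t}\|v\|$. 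Now that $\wt E$ is uniformly contracting, Proposition~\ref{cor:suf-cond-DD} converts the partially dominated splitting into the dominated splitting $\wt E\oplus(\langle X\rangle\oplus\wt F)$ of $T_\Lambda M$, whose first summand $\wt E$ is uniformly contracted; that is precisely partial hyperbolicity of $\Lambda$ for $X_t$.

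The step I expect to be the real obstacle is this last lifting of contraction in the converse: uniform contraction of $\mathcal{N}^s$ under $P_t^X$ does not by itself force uniform contraction of the lift $\wt E$ under $DX_t$, because $DX_t v$ may acquire a component along the flow that $\Pi$ does not see, and a naive telescoping estimate only bounds that component rather than making it decay. What rescues the argument is geometric rather than dynamical: $\wt E$ is a continuous subbundle transverse to $\langle X\rangle$ over the compact set $\Lambda$, so the norm of a vector of $\wt E$ is comparable to the norm of its normal projection, and the flow-direction component cannot take over. A secondary, more bookkeeping-type point is the identification $\Pi(\wt E)=\mathcal{N}^s$, for which I would rely on the construction used in the proof of Theorem~\ref{th:poinc-flow-equivalence}.
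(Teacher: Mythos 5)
Your proof is correct and rests on the same three pillars as the paper's argument --- Lemma~\ref{le:flow-center}, Proposition~\ref{cor:suf-cond-DD} and Theorem~\ref{th:poinc-flow-equivalence} --- but the execution differs in both directions, in each case filling in something the paper leaves implicit. For ``flow partially hyperbolic $\Rightarrow$ Poincar\'e flow partially hyperbolic'' the paper stays on the tangent bundle: it writes $F=\langle X\rangle\oplus\tilde F$ and argues by contradiction that $E$ dominates $\tilde F$, never spelling out the passage to the normal bundle nor the uniform contraction of $\Pi(E)$ under $P^X_t$; you instead push the splitting through $\Pi$ directly and verify contraction and domination of $\Pi(E)\oplus\Pi(F)$ by explicit norm estimates, using the uniform angle bounds on the compact set $\Lambda$ to compare $\|v\|$ with $\|\Pi v\|$ and choosing the $X$-orthogonal lift in $F$ for the co-norm factor. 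For the converse the two arguments coincide in outline (Theorem~\ref{th:poinc-flow-equivalence} followed by Proposition~\ref{cor:suf-cond-DD}), but the paper simply asserts that the reconstructed bundle is uniformly contracting, whereas you correctly identify this as the delicate step and close it with the transversality estimate $\|DX_tv\|\le C_0\|\Pi(DX_tv)\|=C_0\|P^X_t(\Pi v)\|$ valid because $\wt E$ is a continuous bundle transverse to $\langle X\rangle$ over a compact base; this is exactly the justification missing from the paper. The one point you defer, the identification $\Pi(\wt E)=\mathcal{N}^s$, is legitimately discharged either by the cone-field construction inside the proof of Theorem~\ref{th:poinc-flow-equivalence} or, as you note, by uniqueness of the dominated splitting of $P^X_t$ with prescribed fibre dimension.
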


\begin{proof}
First, suppose that the Linear Poincar\'e flow $P_t$ has a partial hyperbolic splitting $\tilde{E} \oplus \tilde{F}$ on $\Lambda$, 
with $\tilde{E}$ uniformly contracting. In particular, this is a dominated splitting for $P_t$. So, by Theorem \ref{th:poinc-flow-equivalence}, 
we obtain a partially dominated splitting $E \oplus \langle X \rangle \oplus F$ for the flow $X_t$ on $\Lambda$. 
Therefore, by Proposition \ref{cor:suf-cond-DD} this splitting induces a dominated one, once $E$ is uniformly contracting and dominates $F$. 
Since $\Lambda$ has no singularities, the splitting $E \oplus E^c$ is partially hiperbolic for $X_t$, where $E^c := \langle X \rangle \oplus F$.

Conversely, suppose that $E \oplus F$ is a partial hyperbolic splitting for $X_t$ on $\Lambda$, with $E$ an uniformly contracting subbundle. 
Then, by Lemma \ref{le:flow-center}, the flow direction is contained into subbundle $F$.

By writting $\tilde{F} := F\setminus\langle X \rangle$, we have $E \oplus F = E \oplus \langle X \rangle \oplus \tilde{F}$. 
We affirm that this is a partially dominated splitting.

Suppose, by contradiction, that the subbundle $E$ does not dominate $\tilde{F}$.
Note that $\Lambda$ is compact, has no singularities, $E$ is uniformly contracting and $\langle X \rangle$ does not vanish. 
Thus, if $\tilde{F}$ contracts sharply than $E$ for $DX_t$, we must have $DX_{-t}\vert_{\tilde{F}}$ expanding more sharply than $DX_t\vert_{E}$ contracts. 
Then, taking a non zero vector $v = v^X \oplus v^{\tilde{F}}$, such that $0 \neq v^X \in \langle X \rangle$ and $v^{\tilde{F}} \in \tilde{F}$, 
we must have for some $t_0 > 0$
\begin{align*}
1 < \Vert DX_{t_0}\vert_{E} \Vert \cdot \Vert DX_{-t_0} v^{\tilde{F}}\Vert
\leq
\Vert DX_{t_0}\vert_{E}  \Vert \cdot \Vert DX_{-t_0} (v^X + v^{\tilde{F}})\Vert
\leq \frac{1}{2}.
\end{align*}
But, it is a contradiction. So, we are done.
\end{proof}

\section{Proof of Theorem \ref{th:poinc-flow-equivalence}}\label{sec:main-proof}

In this section, we give the demonstration of the Theorem $\ref{th:poinc-flow-equivalence}$. 
Before this, we give some more definitions which will be useful in the demonstration.

In \cite{NH2004}, Newhouse has shown some conditions for dominated and hyperbolic splittings on compact invariant sets 
for diffeomorphisms based on induced action on cones fields and its complement. In what follows, it is used his terminology.

Let $E \subset \RR^n$ be a proper subspace, i.e, $0 < dim E < n$. Let $F$ be a complementary subspace, i.e.,
$\RR^n = E \oplus F$.
The \emph{standard unit cone} determined by the subspaces $E$ and $F$ is the set
$$
C_1(E,F)= \{ u = (u_1, u_2), u_1 \in E, u_2 \in F,  \vert v_2\vert \leq \vert v_1\vert \}.
$$

A \emph{cone} $C(E)$, with core $E$, in $\RR^n$ is the image of $C_1(E,F)$ by an linear automorphism $T:\RR^n \to \RR^n$ such that $T(E)=E$. 
A cone $C$ in $\RR^n$ is a set $C(E)$, where $E$ is a proper subspace.

Let $X_t$ be a $C^1$ flow generate by a vector field $X$ on $M$ and $\Lambda$ be a compact invariant set of $X$.

A \emph{cone field} $C = \{C_x\}$ on $\Lambda$ is a collection of cones $C_x \subset T_xM$, for $x \in \Lambda$. 
A cone field is said to have \emph{constant orbit core dimension} if $dim E_x = dim E_{X_t(x)}$ for all $x \in \Lambda$, 
where $E_x$ and $E_{X_t(x)}$ are the core of $C_x, C_{X_t(x)}$, respectively.

Given a cone field $C = \{C_x\}, x \in M$, let
\begin{align*}
m_{C,x}=m_{C,x}(X)= \inf_{u \in C_x \setminus\{0\}} \frac{\Vert DX_t(x) u\Vert}{\Vert u\Vert}
\end{align*}
and
\begin{align*}
{m'}_{C,x}={m'}_{C,x}(X)= \inf_{u \notin C_{X_t(x)}} \frac{\Vert DX_{-t}(X_t(x)) u\Vert}{\Vert u\Vert},
\end{align*}
be the minimal expansion and the minimal co-expansion of $DX_t$ on $C_x$, respectively.
\begin{definition}\label{def-exp-cone}
The domination coefficient of $DX_t$ on $C$ is
\begin{align}
m_d (C) = m_d (C,X) = \inf_{x \in \Lambda} m_{C,x} \cdot {m'}_{C,x}.
\end{align}
If $C$ has constant orbit core dimension and $m_d(C) > 1$, one say that $X$ is \emph{dominating} on $C$ over $\Lambda$. 
Moreover, we say that $X$ is strongly dominating on $C$ if $C$ has constant orbit core dimension and
\begin{align}
\big(\inf_{x \in \Lambda} m_{C,x} \big) \cdot \big(\inf_{x \in \Lambda} {m'}_{C,x}\big) > 1.
\end{align}
\end{definition}

The next useful result is a flow version of another one due to Newhouse in \cite{NH2004}.

\begin{proposition}\cite[Proposition 1.3-cocycles version]{NH2004}
\label{prop:NH}
A sufficient condition for a differentiable cocycle $A_t$ over a vector bundle with compact base $\Lambda$ to have a dominated splitting  
is that there is an $t_0 \in \RR^+$ such that $A_{t_0}$ has a strongly dominated cone field over $\Lambda$.
\end{proposition}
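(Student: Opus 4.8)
The plan is to reduce to a single discrete automorphism and build the invariant subbundles as nested intersections of forward and backward cone iterates, in the spirit of Newhouse's original argument. Write $g := A_{t_0}$, a vector bundle automorphism covering the base homeomorphism $f := X_{t_0}$ of the compact set $\Lambda$, denote by $V_x$ the fiber over $x$, and set $\mu := \inf_{x \in \Lambda} m_{C,x}$ and $\nu := \inf_{x \in \Lambda} {m'}_{C,x}$, so that strong domination reads $\mu\nu > 1$. The first point, which makes any a priori invariance hypothesis unnecessary, is that the domination condition by itself forces the cone field to be forward invariant under $g$: if some nonzero $v \in C_x$ had $u := g(v) \notin C_{f(x)}$, then the definition of ${m'}_{C,x}$ gives $\|v\| = \|g^{-1}(u)\| \ge {m'}_{C,x}\|g(v)\|$, while $v \in C_x$ gives $\|g(v)\| \ge m_{C,x}\|v\|$; multiplying yields $m_{C,x} {m'}_{C,x} \le 1$, contradicting $m_{C,x} {m'}_{C,x} \ge \mu\nu > 1$. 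Hence $g(C_x) \subset C_{f(x)}$ for every $x$, and the symmetric argument applied to $g^{-1}$ produces a complementary cone field $C'$ — a cone about a subspace complementary to the core, contained in $V_x \setminus C_x$ — that is backward invariant, $g^{-1}(C'_{f(x)}) \subset C'_x$.

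Next I would promote these inclusions to strict, uniformly thinning ones. Since $\mu\nu > 1$ strictly and $x \mapsto m_{C,x}, {m'}_{C,x}$ are controlled on the compact base, slightly widened cones still satisfy domination, so $g(\overline{C_x}\setminus\{0\}) \subset \interior C_{f(x)}$ with a quantitative estimate: in core/complement coordinates the ratio of the complementary to the core component of $g(v)$, for $v \in C_x$, is contracted by a factor $\le (\mu\nu)^{-1} < 1$ at each step, and symmetrically for $C'$ under $g^{-1}$. With this uniform thinning in hand, define
\begin{align*}
\wt{F}_x := \bigcap_{n \ge 0} g^{n}\!\left(C_{f^{-n}(x)}\right), \qquad
\wt{E}_x := \bigcap_{n \ge 0} g^{-n}\!\left(C'_{f^{n}(x)}\right).
\end{align*}
Each is a decreasing intersection of nested cones whose apertures shrink geometrically, hence converges to a linear subspace; the constant orbit core dimension hypothesis guarantees that $\dim \wt{F}_x$ equals the core dimension and $\dim \wt{E}_x$ its complement. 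By construction $\wt{E}$ and $\wt{F}$ are $g$-invariant, and since $\wt{F}_x \subset C_x$ while $\wt{E}_x \subset C'_x \subset V_x \setminus C_x$ they meet only at $0$, so a dimension count gives $V_x = \wt{E}_x \oplus \wt{F}_x$. I expect this step — showing the nested cone intersections collapse to subbundles of exactly the core dimension, uniformly and continuously in $x$ — to be the main obstacle, since it is where compactness of $\Lambda$ and the quantitative thinning rate enter in full.

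The domination inequality for $g$ is then read off from the per-step thinning: comparing the growth on $\wt{E}_x$ with that on $\wt{F}_x$, each application of $g$ improves the relevant ratio by at least $(\mu\nu)^{-1}$, so that $\|g^{n}|_{\wt{E}_x}\|\cdot\|g^{-n}|_{\wt{F}_{f^{n}(x)}}\| \le (\mu\nu)^{-n}$ for all $n \ge 0$; continuity of $x \mapsto \wt{E}_x, \wt{F}_x$ follows from the uniform geometric convergence of the defining intersections on the compact base.

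Finally I would pass from the time-$t_0$ map back to the full cocycle. The commutation relation $A_{t_0}\circ A_{t} = A_{t} \circ A_{t_0}$ over the flow shows that $A_{t}(\wt{E}_x) = \wt{E}_{X_t(x)}$ and likewise for $\wt{F}$, so the splitting is $A_t$-invariant for every $t$, not merely for multiples of $t_0$. To upgrade the geometric-in-$n$ estimate at times $nt_0$ to an exponential-in-$t$ one of dominated-splitting type (cf. \eqref{eq:def-dom-split}) for all $t > 0$, write $t = nt_0 + s$ with $0 \le s < t_0$ and absorb the finitely many intermediate factors $\|A_{\pm s}\|$ — uniformly bounded by continuity of the cocycle and compactness of $\Lambda$ — into the constant $K$; choosing $\lambda$ with $e^{-\lambda t_0} = (\mu\nu)^{-1}$ then produces the desired dominated splitting for $A_t$.
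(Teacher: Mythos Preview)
The paper does not supply a proof of this proposition: as the citation in its label indicates, it is quoted from Newhouse \cite{NH2004}, recast for continuous-time cocycles, and used only as a black box in the proof of Theorem~\ref{th:poinc-flow-equivalence}. Your sketch reconstructs Newhouse's cone-field argument and is essentially correct --- the forward-invariance of $C$ deduced from $\mu\nu>1$, the nested-intersection construction of $\wt E,\wt F$, and the passage from the time-$t_0$ map to the full cocycle via uniqueness of the dominated splitting together with the cocycle relation are all standard and sound. The one place worth tightening is the claimed aperture-contraction rate $(\mu\nu)^{-1}$: extracting a uniform geometric thinning of the iterated cones directly from the quantities $m_{C,x}$ and $m'_{C,x}$ is precisely the substantive step in Newhouse's paper and requires more than a one-line remark, though you rightly flagged this as the main obstacle.
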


Now, we are able to give the demonstration of our main theorem.

\begin{proof}[\bf Proof of Theorem \ref{th:poinc-flow-equivalence}]
\hfill

Let $T_{\Lambda}M = E \oplus \langle X \rangle \oplus F$ a partially dominated splitting for $X_t$ on $\Lambda$, i.e.,
there are constants $K > 0$, $0 < \lambda < 1$ such that
$$
\Vert DX_t\vert_{E_x}\Vert \cdot \Vert DX_{-t}\vert_{F_{X_t(x)}}\Vert < K e^{-\lambda t}.
$$

By making a change of coordinates, if necessary, we may consider that the three subbundles are orthogonal between them. 
In fact, take some normalized basis
$$
\{ e_i(x) \}_{i=1}^{l}, u(x), \{ f_j(x) \}_{j=1}^{k}
$$
of $E, \langle X(x) \rangle$ and $F$, respectively. Hence, any vector $z \in T_xM$ can be written as
$$
z = \sum_{i=1}^{l}\alpha_i(x) e_i(x) + \gamma(x) u(x) + \sum_{j=1}^{k} \beta_j(x) f_j(x),
$$
with $\alpha_i(x), \gamma(x), \beta_j(x) \in \mathbb{R}$, $\dim E = l, \dim F = k$ and $l + k + 1 = \dim (T_xM)$. 
Now, we define a new inner product $[\cdot , \cdot]$ given by
\begin{align*}
[z_1, z_2] = \sum_{i=1}^{l}\alpha_i(x) + \gamma(x) + \sum_{j=1}^{k} \beta_j(x),
\end{align*}
where
$$
z_1 = \alpha_{1,i}(x) e_i(x) + \gamma_1(x) u + \sum_{j=1}^{k} \beta_{1,j}(x) f_j(x)
$$
and
$$
z_1 = \alpha_{2,i}(x) e_i(x) + \gamma_2(x) u + \sum_{j=1}^{k} \beta_{2,j}(x) f_j(x).
$$

So, in the induced metric by $[\cdot , \cdot]$, which we denote $| \cdot |$, we have
\begin{align*}
|z|^2 = \sum_{i=1}^{l}(\alpha_i(x))^2 + (\gamma(x))^2 + (\sum_{j=1}^{k} \beta_j(x))^2
\end{align*}
and $e_i(x), u(x), f_j(x)$ are mutually orthogonal, i.e.,
$$
[e_i(x),f_j(x)] = 0, [e_i(x), u(x)] = 0 \ \textrm{and} \ [f_j(x), u(x)] = 0, \forall i=1,\cdots l, j=1,\cdots, k.
$$
Note that this process is continuous with respect to $x \in M$, because the splitting $E \oplus \langle X \rangle \oplus F$ is  continuous.

Thus, as we only replace the initial metric for an equivalent one, with the same continuous invariant subspaces of $T_xM$ for all $x \in M$, 
we may consider $E \oplus F$ as the normal bundle of $\langle X \rangle$, $N^X$, and by writing $E = N^-_{\Lambda}, F = N^+_{\Lambda}$ 
we obtain a dominated splitting
$$
N^X = N^-_{\Lambda} \oplus N^+_{\Lambda},
$$
for $P^X_t$.

Conversely, suppose that there exists a dominated splitting
$N^X = N^-_{\Lambda} \oplus N^+_{\Lambda}$ for $P^X_t$.
We need reconstruct the subspaces of partially dominated splitting
from its correspondent projections on $\wt{T}_{\Lambda}M$.

Define the the following subbundles:
$$
A_{\Lambda}:= N^-_{\Lambda} + \langle X \rangle \ \textrm{and} \ B_{\Lambda}:= \langle X \rangle + N^-_{\Lambda},
$$
over $\Lambda$.

\begin{claim}
The subbundles $A_{\Lambda}$ and $B_{\Lambda}$ are $DX_t$-invariant.
\end{claim}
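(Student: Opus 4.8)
The plan is to derive the invariance of $A_\Lambda$ and $B_\Lambda$ directly from the two invariance properties we already have: the line field $x\mapsto\langle X(x)\rangle$ is $DX_t$-invariant, because $DX_t(x)X(x)=X(X_t(x))$, and the subbundles $N^-_\Lambda,N^+_\Lambda$ of $\mathcal{N}^X$ are $P^X_t$-invariant by hypothesis. As in the first part of the proof I identify $\mathcal{N}^X$ with the orthogonal complement $\langle X\rangle^\perp\subset T_\Lambda M$, so that $N^\pm_\Lambda$ become continuous subbundles of $T_\Lambda M$ consisting of vectors orthogonal to $X$, and $A_\Lambda=N^-_\Lambda\oplus\langle X\rangle$, $B_\Lambda=\langle X\rangle\oplus N^+_\Lambda$ are genuine continuous subbundles of $T_\Lambda M$ (the sums are direct since $N^\pm\perp X$ and $X$ does not vanish on $\Lambda$); in particular $\dim A_x=\dim N^-_x+1$ and $\dim B_x=\dim N^+_x+1$ are locally constant.

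The core step is to prove the inclusion $DX_t(x)(A_x)\subseteq A_{X_t(x)}$ for every $x\in\Lambda$ and every $t\in\R$. I would fix $v\in A_x$ and write $v=n+\alpha X(x)$ with $n\in N^-_x$ (hence $n\perp X(x)$) and $\alpha\in\R$, so that
\[
DX_t(x)v=DX_t(x)n+\alpha\,X(X_t(x)).
\]
The vector $DX_t(x)n$ need not be orthogonal to $X(X_t(x))$, but by definition of the linear Poincar\'e flow its orthogonal projection onto $\langle X(X_t(x))\rangle^\perp$ is exactly $\Pi\circ DX_t(x)\,n=P^X_t(n)$, which lies in $N^-_{X_t(x)}$ by $P^X_t$-invariance of $N^-_\Lambda$. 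Hence $DX_t(x)n=P^X_t(n)+\gamma\,X(X_t(x))$ for some $\gamma\in\R$, and therefore
\[
DX_t(x)v=P^X_t(n)+(\alpha+\gamma)\,X(X_t(x))\in N^-_{X_t(x)}\oplus\langle X(X_t(x))\rangle=A_{X_t(x)}.
\]
This gives $DX_t(x)(A_x)\subseteq A_{X_t(x)}$; applying the same inclusion with $-t$ and base point $X_t(x)$ (or simply comparing dimensions, since $DX_t(x)$ is an isomorphism and $\dim A_x=\dim A_{X_t(x)}$) upgrades it to an equality, so $A_\Lambda$ is $DX_t$-invariant. The argument for $B_\Lambda$ is identical, with $N^-$ replaced by $N^+$.

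I do not anticipate a genuine obstacle here; the only thing that needs attention is that $DX_t$ does not preserve orthogonality to the flow direction, so that $DX_t n$ acquires a component along $X(X_t(x))$ — but this component is harmless precisely because $\langle X\rangle$ has been added back into $A_\Lambda$ and $B_\Lambda$. If one prefers to avoid fixing a metric identification of $\mathcal{N}^X$, the same conclusion follows from the canonical quotient projection $q\colon T_\Lambda M\to\mathcal{N}^X$: one has $q^{-1}(N^\pm_\Lambda)=A_\Lambda,\,B_\Lambda$ respectively, and the relations $q\circ DX_t=P^X_t\circ q$ and $DX_t\langle X\rangle=\langle X\rangle$ immediately yield $DX_t\big(q^{-1}(N^\pm_\Lambda)\big)=q^{-1}\big(P^X_t(N^\pm_\Lambda)\big)=q^{-1}(N^\pm_\Lambda)$.
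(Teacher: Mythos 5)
Your argument is correct and is essentially the paper's own proof: both decompose $v\in A_x$ into its $N^-$-component plus a flow-direction component, push forward, use $\Pi\circ DX_t=P^X_t$ on the normal part together with the $P^X_t$-invariance of $N^\pm_\Lambda$, and absorb the extra component along $X(X_t(x))$ into the $\langle X\rangle$ summand. Your closing remark via $q^{-1}(N^\pm_\Lambda)$ and $q\circ DX_t=P^X_t\circ q$ is just a cleaner packaging of the same idea, not a different route.
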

Indeed, by following \cite[Lemma 2.5]{BM2010}, take $x \in \Lambda$ and $v_x \in A_x$, 
so there is a unique $v_x^- \in N^-_x$ such that $v_x - v_x^-  \in \langle X(x) \rangle$. 
As $\langle X(x) \rangle$ is $DX_t$-invariant, for $t \in \RR$, so
$$
DX_t(x)v_x - DX_t(x)v_x^- \in \langle X(X_t(x)) \rangle.
$$
Hence, $\Pi_{X_t(x)}DX_t(x)v_x = P_t(x)v_x^-$ and, from this, we have
\begin{align*}
\Pi_{X_t(x)}DX_t(x)v_x \in N^-_{X_t(x)},
\end{align*}
once $N^-_{\Lambda}$ is $P_t$-invariant.

Since, by definition of the orthogonal projection, $DX_t(x)v_x - \Pi_{X_t(x)}DX_t(x)v_x \in \langle X(x) \rangle$, we obtain,
\begin{align*}
DX_t(x)v_x \in N^-_{X_t(x)} + \langle X(X_t(x)) \rangle = A_{X_t(x)},
\end{align*}
so, $A_{\Lambda}$ is $DX_t$-invariant. Analogously for $B_{\Lambda}$. Thus, we have proved the claim.

Now, since $\Lambda$ is non-singular and compact, we can take cone fields around $N^-_{\Lambda}$ and $N^+_{\Lambda}$ which are, respectively, 
complementary to $\langle X \rangle$ obtaining two complementary subbundles to 
$T_{\Lambda}M \setminus \langle X \rangle$, $\widetilde{E}$ and $\widetilde{F}$ for which hold the domination condition.

We get the above mentioned by using a flow version of Newhouse's result.

In fact, take a cone $C^+_x$ with core $N^+_x$ in $\hat{T}_xM := T_xM/\langle X\rangle$. Note that, 
because $\langle X\rangle$ is a $DX_t$-invariant subspace, we have its complementary in $T_xM$ is also invariant. 
So, we can take cones into this complementary subspace and consider if we have strongly domination there. 
We affirm that $DX_t(x)$ is strongly dominating on $C^+_x$ into $T_{\Lambda}M \setminus \langle X \rangle$.

Indeed, because the domination condition on $N^-_{\Lambda}$ and $N^+_{\Lambda}$ for the linear Poincar\'e flow $P_t$, we must have, for some $T_0 > 0$,
\begin{align*}
\big(\inf_{x \in \Lambda} m_{C^+_x,x}(DX_{T_0}) \big) \cdot \big(\inf_{x \in \Lambda} {m'}_{C^+_x,x}(DX_{T_0})\big) \geq \\
\big(\inf_{x \in \Lambda} m_{C^+_x,x}(P_{T_0}) \big) \cdot \big(\inf_{x \in \Lambda} {m'}_{C^+_x,x}(P_{T_0})\big) > 1.
\end{align*}
Look that the first inequality is also true: by definition, $P_t = \Pi \circ DX_t$ and $\Vert \Pi \Vert = 1$, so
\begin{align*}
\Vert P_t v \Vert = \Vert \Pi \circ DX_t v \Vert \leq \Vert \Pi \Vert \cdot \Vert DX_t v \Vert = \Vert DX_t v \Vert.
\end{align*}

We also note that, for a fixed $t>0$, the limit subspace
\begin{align*}
C^+_n = \cap_{n \geq 0} DX_{-nt}(X_{-nt}(x))(C^+_{X_{-nt}(x)}), n \in \mathbb{N},
\end{align*}
is $DX_t$-invariant. Moreover, it cannot contain the flow direction, otherwise we must have
\begin{align*}
DX_{nt} (C^+_n(x)) =\\
DX_{nt} \big[ \cap_{n \geq 0} DX_{-nt}(X_{-nt}(x))(C^+_{X_{-nt}(x)}) \big] \subset\\
\cap_{n \geq 0} DX_{nt} (DX_{-nt}(X_{-nt}(x))(C^+_{X_{-nt}(x)})) = \\
C^+_{X_{-nt}(x)} \subset N^-_{X_{-nt}(x)} \oplus N^+_{X_{-nt}(x)}.
\end{align*}
But, by invariance of $\langle X \rangle$ and because it is orthogonal to $N^- \oplus N^+$, so there exists a singularity. 
This contradiction shows that $\langle X(x) \rangle \nsubseteq C^+_n(x)$. And, finally, our claim is true.

So, by Proposition \ref{prop:NH}, we obtain a dominated splitting $\widetilde{E} \oplus \widetilde{F}$ for $DX_t$ over 
$T_{\Lambda}M \setminus \langle X \rangle$.

Thus,
\begin{align*}
T_{\Lambda}M = \widetilde{E} \oplus \langle X \rangle \oplus \widetilde{F},
\end{align*}
is a partially dominated decomposition over $\Lambda$.

\end{proof}

\end{document}